\newcommand{\gl}{{\mathfrak g \mathfrak l}}
\newcommand{\ssl}{{\mathfrak s \mathfrak l}}
\newcommand{\g}{{\mathfrak g}}         % Lie algebra of G
\newcommand{\h}{{\mathfrak h}}         % Lie algebra of H
\newcommand{\cx}{{\mathbb C}}
\newcommand{\ad}{\operatorname{ad}}
\newcommand{\Ad}{\operatorname{Ad}}
\newcommand{\Lie}{\operatorname{Lie}}
\newcommand{\Gr}{\operatorname{Gr}}
\newcommand{\Hilb}{\operatorname{Hilb}}
\numberwithin{equation}{section}
\newtheorem{theorem}{Theorem}[section]
\newtheorem{lemma}[theorem]{Lemma}
\newtheorem{proposition}[theorem]{Proposition}
\theoremstyle{remark}
\newtheorem{remark}[theorem]{Remark}
\newtheorem{example}[theorem]{Example}
\newcommand{\oN}{{\mathbb{N}}}
\newcommand{\oP}{{\mathbb{P}}}
\newcommand{\sO}{{\mathcal{O}}}
\newcommand{\sS}{{\mathcal{S}}}
\newcommand{\fH}{{\mathfrak{h}}}
\def\Sym{{\displaystyle\mathfrak{S}}}
\begin{document}

\title{On the Moore-Tachikawa varieties}
\author{Roger Bielawski}
\address{Institut f\"ur Differentialgeometrie\\
Universit\"at Hannover\\ Welfengarten 1\\ D-30167 Hannover}

%\email{R.Bielawski@ed.ac.uk}
%\date{\today}

%\thanks{}
%\dedicatory{}

\subjclass[2020]{14J42, 14L30, 53D20, 57K16}

%\LinearAV
\begin{abstract}
 Moore-Tachikawa varieties are certain Hamiltonian holomorphic symplectic  varieties conjectured in the context of $2$-dimensional topological quantum field theories.  We discuss several constructions related to these varieties. 
\end{abstract}

\maketitle

\thispagestyle{empty}

\section{Introduction}

Let $G_\cx$ be a  simple and simply connected complex Lie group. 
In \cite{MT} Moore and Tachikawa conjectured the existence of a functor $\eta_{G_\cx}$ from the category of $2$-bordisms to the category of holomorphic symplectic varieties with Hamiltonian action, such that gluing of boundaries corresponds to the holomorphic symplectic quotient with respect to the diagonal action $G_\cx$. 
%The properties of the functor  imply that it suffices to define it on basic cobordims with $1,2$ or $3$ incoming circles.
If $W^b_{G_\cx}$, $b\in \oN$, denotes the image of the basic cobordism with $b$ incoming circles, then the complex symplectic varieties are supposed to have the following properties:
\begin{itemize}
\item[(a)] $W^b_{G_\cx}$ has a Hamiltonian action of $\Sym_b\ltimes G_\cx^b$;
\item[(b)] the symplectic quotient of $W^b_{G_\cx}\times W^{b^\prime}_{G_\cx}$,  $b+b^\prime \geq 3$, by $G_\cx$ embedded into $G_\cx^b\times G_\cx^{b^\prime}$ via 

\vspace{-3.5mm}

$$g\mapsto \Bigl((1,\dots,1,g,1,\dots,1),(1,\dots,1,g,1,\dots,1)\Bigr) $$

\vspace{-1.5mm}

is isomorphic to  $W^{b+b^\prime-2}_{G_\cx}$ (for any choice of positions of $g$);

\vspace{1mm}

\item[(c)] $W^2_{G_\cx}\simeq T^\ast G_\cx$ and $W^1_{G_\cx}\simeq G_\cx\times \mathscr{S}_{\g_\cx}$, where $\mathscr{S}_{\g_\cx}$ is a Slodowy slice to the regular nilpotent orbit;
\item[(d)] if $\mu_1,\mu_2,\mu_3$ denote the moment maps for the three $G_\cx$-actions on $W^3_{G_\cx}$, then $P(\mu_1)=P(\mu_2)=P(\mu_3)$ for any invariant polynomial $P\in \cx[\g_\cx^\ast]^{G_\cx}$.
\end{itemize}
%%%%%
We observe that these properties are actually contradictory. First of all, (b) implies that the symplectic quotient of  $W^3_{G_\cx}\times W^{1}_{G_\cx}$ by $G_\cx$ is isomorphic to $W^2_{G_\cx}\simeq T^\ast G_\cx$ as a holomorphic $G_\cx\times G_\cx$-manifold. Property (d) implies now that the two moment maps $\nu_1,\nu_2$ on $T^\ast G_\cx$ also satisfy $P(\nu_1)=P(\nu_2)$ for any $P\in \cx[\g_\cx^\ast]^{G_\cx}$. However, the two moment maps on $T^\ast G_\cx$ in the trivialisation $T^\ast G_\cx\simeq G_\cx\times \g_\cx^\ast$ given by right-invariant forms are  $\nu_1(g,X)=X$, $\nu_2(g,X)=-\ad(g^{-1})X$.
\par
An {\em ad hoc} solution to this contradiction is to modify  (b). Fix a Cartan subalgebra $\fH_\cx$ of $\g_\cx$ and let $\theta_{\fH_\cx}$ be the automorphism of $\g_\cx$ determined by $h\mapsto -h$ on $\fH$ and $\alpha\mapsto -\alpha$ on roots\footnote{Recall that a simple Lie algebra is determined by its root system, so that a vector space isomorphism of Cartan subalgebras of two such algebras $\g,\g^\prime$, which induces an isomorphism of root systems, extends to an isomorphism between $\g$ and $\g^\prime$.}.
For example, if $\g_\cx=\ssl(k,\cx)$ and $\fH_\cx$ consists of diagonal matrices, then $\theta_{\fH_\cx}(A)=-A^T$. 
We denote the corresponding involution on $G_\cx$ by the same symbol. 
\par
Axiom (b) can then be modified as follows:
%%%%%
\begin{itemize}
\item[(b')] the symplectic quotient of $W^b_{G_\cx}\times W^{b^\prime}_{G_\cx}$ by $G_\cx$ embedded into $G_\cx^b\times G_\cx^{b^\prime}$ via

\vspace{-4mm}

$$g\mapsto \Bigl((1,\dots,1,\theta_{\fH_\cx}(g),1,\dots,1),(1,\dots,1,g,1,\dots,1)\Bigr) $$

\vspace{-1.5mm}

is isomorphic to  $W^{b+b^\prime-2}_{G_\cx}$.  \end{itemize}
Simultaneously we replace  the action of $G_\cx\times G_\cx$ on  $W^2_{G_\cx}\simeq T^\ast G_\cx$ given by left and right translations with the action given by ({\em left translations, right translations composed with $\theta_{\fH_\cx}$}).

\medskip

A more elegant solution is to consider {\em oriented} cobordisms\footnote{As far as we understand,  Moore and Tachikawa actually do want a functor from the category of oriented cobordisms.}
and attach a holomorphic symplectic variety  $W^{b,b^\prime}_{G_\cx}$  to any oriented pair of pants $X$  with the boundary consisting of $b$ incoming and $b^\prime$ outgoing circles.
We say that a boundary circle is ``incoming" (resp.\ ``outgoing"), if its orientation together with the inward (resp.\ outward) pointing normal is the orientation of $X$.  Thus for two circles we shall have varieties $W^{2,0}_{G_\cx}$, $W^{1,1}_{G_\cx}$, and $W^{0,2}_{G_\cx}$. They are all isomorphic to each other as holomorphic symplectic varieties, but not as $G_\cx$-varieties. For instance, the isomorphism $\phi$ between $W^{2,0}_{G_\cx}$  and $W^{1,1}_{G_\cx}$ is equivariant with respect to one of the ``incoming" copies of $G_\cx$, but it satisfies  
\begin{equation}\phi(g.m)=\theta_{\h_\cx}(g).\phi(m),\label{theta-G}\end{equation}
for the other incoming $G_\cx$ on $W^{2,0}_{G_\cx}$ and the outgoing $G_\cx$ on $W^{1,1}_{G_\cx}$. We can form a symplectic quotient only by matching an outgoing and an incoming circle.

\medskip

The varieties $W^{b,b^\prime}_{G_\cx}$ should have the following properties:
\begin{itemize}
\item[(A)] $W^{b,b^\prime}_{G_\cx}$ has a Hamiltonian action of $(\Sym_b\times \Sym_b^\prime)\ltimes G_\cx^{b+b^\prime}$;
\item[(B)] the symplectic quotient of $W^{b,b^\prime}_{G_\cx}\times W^{c,c^\prime}_{G_\cx}$,  $b+b^\prime +c+c^\prime\geq 3$, by $G_\cx$ embedded diagonally into one of the $b^\prime$ $G_\cx$-factors on $W^{b,b^\prime}_{G_\cx}$ and into one of the $c$ $G_\cx$-factors of $W^{c,c^\prime}_{G_\cx}$ is isomorphic to $W^{b+c-1,b^\prime+c^\prime-1}_{G_\cx}$;

\vspace{1mm}

\item[(C)] $W^{1,0}_{G_\cx}\simeq G_\cx\times \mathscr{S}_{\g_\cx}$, and $W^{1,1}_{G_\cx}\simeq T^\ast G_\cx$;

\vspace{1mm}

\item[(D)] if $\mu_i$, $i=1,\dots, b$ (resp. $\mu_i^\prime$, $i=1,\dots, b^\prime$)  denote the moment maps for the $b$ factors (resp.\ $b^\prime$ factors), then $P(\mu_i)=P(\mu_j)=P(-\mu_k^\prime)$ for any $i,j,k$ and any invariant polynomial $P\in \cx[\g_\cx^\ast]^{G_\cx}$;
\item[(E)] There exists an isomorphism of holomorphic symplectic varieties $\phi:W^{b,b^\prime}_{G_\cx}\to W^{b-1,b^\prime+1}_{G_\cx}$, which is equivariant with respect to the first $b-1$ $G_\cx$-factors on both varieties, and  with respect to the first $b^\prime$ $G_\cx$-factors on both varieties, while it satisfies \eqref{theta-G} with respect to the $b$-th factor on $W^{b,b^\prime}_{G_\cx}$ and $(b^\prime+1)$-th factor on $W^{b-1,b^\prime+1}_{G_\cx}$.
\end{itemize}
%%%%
\begin{remark} The existence of the action of $\Sym_b\times \Sym_b^\prime$ implies then that there is an isomorphism $\phi_{ij}:W^{b,b^\prime}_{G_\cx}\to W^{b-1,b^\prime+1}_{G_\cx}$ having the properties in (E) for any $i=1,\dots,b$, and $j=1,\dots,b^\prime$.\end{remark}

\medskip

In the present work we construct an open dense subset $U^{b,b^\prime}_{G_\cx}$ of $W^{b,b^\prime}_{G_\cx}$ as a symplectic quotient of the product of $b$ copies of  $W^{1,0}_{G_\cx}$ and $b^\prime$ copies of $W^{0,1}_{G_\cx}$ by an abelian group. This open subset  consists of points of $W^{b,b^\prime}_{G_\cx}$ such that the value of the moment map for each $G_\cx$-action is a regular\footnote{We use ``regular" to mean that the dimension of the centraliser equals the rank of $\g_\cx$.} element of $\g_\cx$. Moreover, the manifolds  $U^{b,b^\prime}_{G_\cx}$  satisfy axioms (A), (B), (D), and (E). In other words, $U^{b,b^\prime}_{G_\cx}$ satisfy all axioms (A)-(E), provided that we replace $W^{1,1}_{G_\cx}$ with its open subset  $G_\cx\times \g_\cx^{\rm reg}$.
\par
In \S 3 we relate $U^{b,b^\prime}_{SL(k,\cx)}$ to Hilbert schemes of points in $\cx\times S_k^{b,b^\prime}$, where $S_k^{b,b^\prime}$ is the manifold of rank one tensors in $\bigl(\cx^k\bigr)^{\otimes b}\otimes\bigl(\cx^k\bigr)^{\ast\otimes b^\prime}$.
 As a byproduct, we find an interesting presymplectic manifold $F_{G_\cx}$ for any simple Lie group $G_\cx$ of type $A_{k-1}$. It  contains $G_\cx\times \mathscr{S}_{\g_\cx}$ as an open dense set and it has a locally free Hamiltonian action of $G_\cx$, such that the moment map induces a bijection between $G_\cx$-orbits in $F_{G_\cx}$ and adjoint orbits in $\ssl(k,\cx)$.
 It seems likely that such an $F_{G_\cx}$  exists for any simple Lie group. Indeed, a Lie-theoretic method of construction is indicated by the description in \S 3.

\medskip

{\em Remark 1.} A very different construction of the Moore-Tachikawa varieties as Poisson varieties satisfying all the axioms has been given by Braverman, Finkelberg, and Nakajima \cite{BFN}   (see also \cite{Ara}). It is expected that the two constructions produce varieties isomorphic on an open dense subset, but this does not seem to be easy to prove.
\par
{\em Remark 2.} After this paper has been posted on the ArXiv, I have received a preprint from David Kazhdan  \cite{GK}. In it he and Victor Ginzburg construct varieties $U^{b,0}$ (and much more) by what is essentially the same method, although the technical details are different.
\par
{\em Remark 3.} My motivation for this work is not so much the Moore-Tachikawa conjecture, as the hyperk\"ahler geometry of  $U^{b,b^\prime}_{G_\cx}$. This will be presented in  a future paper.
\par
{\em Acknowledgement.} I thank Lorenzo Foscolo and Hiraku Nakajima for comments. In particular I thank the former for his suggestion to consider $W^{b,b^\prime}_{G_\cx}$ rather than $W^b_{G_\cx}$ with the above {\em ad hoc} solution.

\section{An open dense subset of  $W^{b,b^\prime}_{G_\cx}$ }

Let $G_\cx$ be a complex semisimple Lie group of rank $r$ with Lie algebra $\g_\cx$. We identify $\g_\cx^\ast$ with $\g_\cx$ using the Killing form $\langle\;,\;\rangle$. Let $\h_\cx$ be a fixed Cartan subalgebra and let 
$$ \g_\cx=\h_\cx \oplus\bigoplus_{\alpha\in \Lambda}\Phi_\alpha$$
be a root space decomposition. Let $\Delta$ be a set of simple roots. Let $\mathscr{S}_{\g_\cx}$ be the Slodowy slice $e+Z(f)$ determined by a principal $\ssl(2,\cx)$-triple $(e,f,h)$ with $e\in \bigoplus_{\alpha\in \Delta}\Phi_{-\alpha}$.

Our starting point is the holomorphic symplectic manifold $W^{1,0}_{G_\cx}\simeq G_C\times \mathscr{S}_{\g_\cx}$. The symplectic form at $(g,X)$ is given by
\begin{equation} 
 -\bigl\langle dX\wedge g^{-1}dg\bigr\rangle+\bigl\langle X,g^{-1}dg\wedge g^{-1}dg\bigr\rangle=\bigl\langle  dgg^{-1}\wedge d\bigl(\Ad(g) X\bigr)\bigr\rangle,\label{form1}\end{equation}
where $\langle \phi\wedge \psi\rangle(u,v)=\langle \phi(u),\psi(v)\rangle-\langle \phi(v),\psi(u)\rangle.$
The action of $G_\cx$ by left translations is Hamiltonian with the moment map given by $\mu(g,X)=\Ad( g) X$. 
\par
We define analogously $W^{0,1}_{G_\cx}\simeq G_C\times \mathscr{S}_{\g_\cx}$, with the symplectic form
$$\bigl\langle  g^{-1}dg\wedge d\bigl(\Ad(g^{-1}) X\bigr)\bigr\rangle$$
The action of $G_\cx$ is now on the right and the moment map is $\mu^\prime (g,X)=-\Ad(g^{-1})X$. In particular, $W^{1,0}_{G_\cx}$ and $W^{0,1}_{G_\cx}$ satisfy (E) with 
$$\phi(X,g)=\bigl(-\Ad(p)\circ \theta_{\h_\cx}(X), p\theta_{\h_\cx}(g)^{-1}p^{-1}\bigr),$$ 
where $p\in G_\cx$ conjugates the opposite Slodowy slice (with $e\in \bigoplus_{\alpha\in \Delta}\Phi_{\alpha}$) to $\mathscr{S}_{\g_\cx}$.

\medskip

$W^{1,0}_{G_\cx}$ and $W^{0,1}_{G_\cx}$ also have Hamiltonian actions of the abelian group $A_{\g_\cx}=\cx[\g_\cx]^{G_\cx}\simeq\cx^r$ defined as follows. Write an  invariant polynomial $P\in \cx[\g_\cx]^{G_\cx}$ as $P(X)=p(X,\dots,X)$ for an invariant multilinear and symmetric form $p$ and define $C_P(X)\in \g_\cx$ via
\begin{equation} \bigl\langle C_P(X),Y\bigr\rangle=(\deg P)\cdot p(X,\dots,X,Y) \quad \forall Y\in \g_\cx.\label{C_P}\end{equation}
The elements $C_P(X)$, $P\in \cx[\g_\cx]^{G_\cx}$, generate the centraliser $Z(X)$ of a regular $X$ (cf.\ \cite[Lemma 6]{SC}).
Now $P\in A_{\g_\cx}$  acts on $W^{1,0}_{G_\cx}$ via
\begin{equation} P.(g,X)= \bigl(g\exp(C_P(X)),X\bigr),\label{P-action}\end{equation}
and on $W^{0,1}_{G_\cx}$ via
\begin{equation}  P.(g,X)= \bigl(\exp(C_P(X))g,X\bigr).\label{P2-action}\end{equation}
%%%%
The moment map $\nu$ for the action of $A_{\g_\cx}$ is easily seen to be in both cases
\begin{equation} \nu(g,X)(P)=P(X),\quad P\in \Lie(A_{\g_\cx})\simeq A_{g_\cx}.\label{nu}\end{equation}
%%%
Moreover,  the actions of $G_\cx$ and $A_{\g_\cx}$ commute and the moment map for the $G_\cx$-action identifies the subset $\g^{\rm reg}_\cx\subset \g_\cx$ of regular elements as the geometric quotient of $W^{1,0}_{G_\cx}$ or  $W^{0,1}_{G_\cx}$ by  $A_{\g_\cx}$.

We shall now want to define, for any $b,b^\prime\geq 0$, a holomorphic symplectic variety $U^{b,b^\prime}_{G_\cx}$ as the symplectic quotient (in appropriate sense) of the product of $b$ copies of $W^{1,0}_{G_\cx}$  and $b^\prime$ copies of $W^{0,1}_{G_\cx}$  by the diagonal action of 
\begin{equation} A_0=\bigl\{(P_1,\dots,P_{b+b^\prime})\in (A_{\g_\cx})^{b+b^\prime}\,;\,\sum_{i=1}^{b+b^\prime} P_i=0\}.\label{A_0}\end{equation}
The level set of the moment map is chosen to be $0$.
\begin{lemma} The $0$-level set of the moment map for the action of $A_0$ on $\bigl(W^{1,0}_{G_\cx}\bigr)^b\times \bigl(W^{0,1}_{G_\cx}\bigr)^{b^\prime}$
is given by 
$$X_1=\dots =X_{b+b^\prime}\in\mathscr{S}_{\g_\cx}.$$\label{simple}
\end{lemma}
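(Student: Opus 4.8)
The plan is to compute the moment map for the $A_0$-action directly from the moment maps $\nu$ on each factor given in \eqref{nu}, and then to translate the vanishing condition into the stated equalities. The group $A_0$ is the kernel of the summation map $(A_{\g_\cx})^{b+b^\prime}\to A_{\g_\cx}$, so its Lie algebra is the subspace of tuples $(P_1,\dots,P_{b+b^\prime})$ with $\sum_i P_i=0$. First I would note that for a product of Hamiltonian spaces the moment map is the sum of the individual moment maps, and then restrict the resulting element of $(A_{\g_\cx})^\ast\cong A_{\g_\cx}$ along the inclusion $\Lie(A_0)\hookrightarrow \Lie((A_{\g_\cx})^{b+b^\prime})$. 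Concretely, at a point $(g_1,X_1,\dots,g_{b+b^\prime},X_{b+b^\prime})$ the value of this restricted moment map on a tuple $(P_1,\dots,P_{b+b^\prime})\in\Lie(A_0)$ is $\sum_{i=1}^{b+b^\prime} P_i(X_i)$.

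The condition that this vanishes for \emph{all} $(P_i)\in\Lie(A_0)$ is the main point. The constraint $\sum_i P_i=0$ means the functional $\bigl(\sum_i P_i(X_i)\bigr)$ must vanish on a codimension-$r$ subspace, so I would argue that $\sum_i P_i(X_i)=0$ for all $(P_i)$ with $\sum_i P_i=0$ holds precisely when the linear functional $(P_1,\dots,P_{b+b^\prime})\mapsto \sum_i P_i(X_i)$ on $(A_{\g_\cx})^{b+b^\prime}$ factors through the quotient by $\Lie(A_0)$, i.e.\ when it is a multiple of the summation functional $(P_i)\mapsto \sum_i P_i$. Equivalently, there exists a single regular element value such that $P_i(X_i)$ depends only on $\sum_i P_i$. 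Unwinding this, the cleanest route is to take, for a fixed invariant polynomial $P$, the tuple placing $+P$ in slot $i$ and $-P$ in slot $j$ and zero elsewhere; this lies in $\Lie(A_0)$, and the vanishing condition forces $P(X_i)=P(X_j)$ for all invariant $P$ and all pairs $i,j$.

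The step requiring care is deducing $X_1=\dots=X_{b+b^\prime}$ in $\mathscr{S}_{\g_\cx}$ from the scalar equalities $P(X_i)=P(X_j)$ for all $P\in\cx[\g_\cx]^{G_\cx}$. Each $X_i$ already lies in the Slodowy slice $\mathscr{S}_{\g_\cx}$ by construction of $W^{1,0}_{G_\cx}$ and $W^{0,1}_{G_\cx}$. The essential fact is that the restriction of the invariants $\cx[\g_\cx]^{G_\cx}$ to the Slodowy slice separates points of the slice; indeed, by Kostant's theorem the adjoint quotient map $\g_\cx\to \g_\cx/\!/G_\cx\cong\cx^r$ restricts to an isomorphism on $\mathscr{S}_{\g_\cx}$. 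Hence the equality of all invariant polynomials on two slice elements forces those elements to coincide. I expect this identification — that agreement of all invariants on the slice implies equality of the points — to be the crux of the argument, and it is exactly where the choice of $\mathscr{S}_{\g_\cx}$ as a Slodowy slice (rather than an arbitrary subvariety) is used.

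Putting these together: the $0$-level set is carved out by $P(X_i)=P(X_j)$ for all $i,j$ and all invariant $P$, which by Kostant's isomorphism on the slice is equivalent to $X_1=\dots=X_{b+b^\prime}$, all lying in $\mathscr{S}_{\g_\cx}$, as claimed. No condition on the $g_i$ arises because the $A_{\g_\cx}$-moment map $\nu$ in \eqref{nu} depends only on the $X$-coordinate.
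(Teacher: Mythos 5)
Your proof is correct and follows essentially the same route as the paper: both test the vanishing of the moment map against tuples with $+P$ in one slot and $-P$ in another to deduce $P(X_i)=P(X_j)$ for all invariant $P$, and then conclude equality of the $X_i$ from the fact that invariants separate points of the Slodowy slice. You are merely more explicit than the paper in citing Kostant's theorem for that last step, which the paper compresses into ``belong to the same adjoint orbit, i.e.\ $X_1=\dots=X_{b+b^\prime}$''.
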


\vspace{-5mm}

\begin{proof}Let $P\in \cx[\g_\cx]^{G_\cx}$ and consider a $1$-dimensional subgroup of $A_0$ given by 
$$\{0,\dots,0,zP,0,\dots,0, -zP,0,\dots 0\},$$
where the nonzero entries are in the $i$-th and in the $j$-th place. The moment map is, according to \eqref{nu}, equal to
$$z=(g_i,X_i)_{i=1}^{b+b^\prime}\mapsto P(X_i)-P(X_j),$$ 
and so $z$ belongs to the zero set of the moment map for $A_0$ if and only if $X_1,\dots,X_{b+b^\prime}$ belong to the same adjoint orbit, i.e.\ $X_1=\dots =X_{b+b^\prime}.$
\end{proof}
%%%%
Thus the $0$-level set of the moment map is the manifold $Y=G_\cx^{b+b^\prime}\times \mathscr{S}_{\g_\cx}$. The action of $A_0$ on $Y$ is not, however, proper, due to the fact that the centralizer of a regular element may be disconnected. Fortunately, although $A_0$ is not reductive, the GIT issues are straightforward to resolve in this case.
\par
We consider the closure $R\subset Y\times Y$ of the relation defined by $A_0$. Thus $R$ is the set
\begin{equation} \Bigl\{\bigl(g_i,X\bigr),\bigl(h_i,X\bigr)\in Y\times Y;\;Ad(u_i)X=X,\enskip \prod u_i=1\Bigr\},\label{relation}
\end{equation}
where $u_i=h_i^{-1} g_i$ if $i\leq b$ and $u_i=g_ih_i^{-1}$ if $i>b$.  This is an analytic subset of $Y\times Y$ and the analytic relation $R$ satisfies  the assumptions of a theorem of Grauert (see \cite[Thm.\ 7.1]{VII} and the second paragraph on p.\ 203 there) which guarantee that the quotient of $Y$ by this relation, defined as the ringed space $(Y, \sO(Y))/R$,  is a normal complex space.
We define 
$U^{b,b^\prime}_{G_\cx}$ to be this space. We have:
%%%%%%
%%%%
\begin{proposition} $U^{b,b^\prime}_{G_\cx}$ is  smooth. The action of any $G_\cx$-factor on  $U^{b,b^\prime}_{G_\cx}$ is free and proper, and the quotient is biholomorphic to
$$ Q=\bigl\{(X,y_1,\dots,y_{b+b^\prime -1})\in \mathscr{S}_{G_\cx}\times \g_\cx^{b+b^\prime-1}\;;\; y_i\in O(X),\enskip i=1,\dots,b+b^\prime-1\bigr\},$$
where $O(X)$ is the adjoint orbit of $X$.\label{smooth}
\end{proposition}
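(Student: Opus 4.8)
The plan is in two parts: to prove smoothness of $U^{b,b^\prime}_{G_\cx}$ directly, and to identify its quotient by one $G_\cx$-factor with $Q$; combining the two exhibits $U^{b,b^\prime}_{G_\cx}\to Q$ as a principal $G_\cx$-bundle. Throughout set $n=b+b^\prime$ and let $\chi\colon\g_\cx\to\g_\cx/\!/G_\cx\cong\cx^r$ be the adjoint quotient. I will use two facts of Kostant: every element of $\mathscr{S}_{\g_\cx}$ is regular and $\chi$ maps $\mathscr{S}_{\g_\cx}$ isomorphically onto $\cx^r$; and $\chi$ is a submersion on $\g^{\rm reg}_\cx$, so that each regular orbit is $O(X)=\chi^{-1}(\chi(X))\cap\g^{\rm reg}_\cx$. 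The first fact identifies $Q$ with the iterated fibre product of $n-1$ copies of $\g^{\rm reg}_\cx$ over $\cx^r$, the point $X$ being recovered as the Slodowy representative of the common value $\chi(y_1)=\dots=\chi(y_{n-1})$; since $\chi|_{\g^{\rm reg}_\cx}$ is a submersion, this fibre product is smooth, so $Q$ is a smooth manifold.

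For smoothness of $U^{b,b^\prime}_{G_\cx}$ itself I would argue directly from \eqref{relation}. The relation $R$ is the orbit relation of the fibrewise free action, over $\mathscr{S}_{\g_\cx}$, of the family of groups $\mathcal{K}=\{(u_i)\in Z(X)^n:\prod_i u_i=1\}$ acting on $Y=G_\cx^{n}\times\mathscr{S}_{\g_\cx}$ by the translations $g_i\mapsto g_iu_i^{-1}$ $(i\le b)$ and $g_i\mapsto u_i^{-1}g_i$ $(i>b)$. The essential input is that the family of centralizers $X\mapsto Z(X)$ over $\mathscr{S}_{\g_\cx}$ is a smooth group scheme, even though its fibres may be disconnected; it is precisely this disconnectedness that forces the use of the full $Z(X)$ in \eqref{relation} in place of the image of $\exp C_P$, and thereby restores properness (translation by the closed subgroup $Z(X)$). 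Granting smoothness of this group scheme, $R$ is the image of a closed embedding $Y\times_{\mathscr{S}_{\g_\cx}}\mathcal{K}\hookrightarrow Y\times Y$, hence a closed smooth submanifold on which the first projection to $Y$ is a submersion; by Godement's criterion (holomorphic version, properness yielding a Hausdorff quotient) the Grauert quotient $U^{b,b^\prime}_{G_\cx}=Y/R$ is in fact smooth and $\pi\colon Y\to U^{b,b^\prime}_{G_\cx}$ is a submersion admitting local holomorphic sections.

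Next I would identify the quotient by one factor, say the first, with $Q$. On $Y$ define $\bar\Phi(g_1,\dots,g_n,X)=(X,\Ad(g_2)X,\dots,\Ad(g_b)X,\Ad(g_{b+1}^{-1})X,\dots,\Ad(g_n^{-1})X)$, whose entries lie in $O(X)$; using $\Ad(u_i)X=X$ one checks $\bar\Phi$ is $R$-invariant, so it descends to a holomorphic map $\bar\Phi\colon U^{b,b^\prime}_{G_\cx}\to Q$, manifestly invariant under the first $G_\cx$-factor. A short computation shows that the $\bar\Phi$-fibre of a point of $Y$ is exactly its combined orbit under $R$ and that factor: the equalities of $\Ad$-images force $h_i^{-1}g_i\in Z(X)$ (resp.\ $g_ih_i^{-1}\in Z(X)$) for $i\ge2$, and then the freedom in $h_1$ together with the constraint $\prod_i u_i=1$ fixes the last group element and absorbs it into a first-factor translation. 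Hence $\bar\Phi$ induces a bijection $U^{b,b^\prime}_{G_\cx}/G_\cx\to Q$. Taking instead $h_i=g_i$ for $i\ge2$ and $h_1=\gamma g_1$ forces every $u_i=1$, hence $\gamma=1$; this is where the constraint $\prod_i u_i=1$ is indispensable, and it proves that the first-factor action on $U^{b,b^\prime}_{G_\cx}$ is free.

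Finally I would upgrade this to a biholomorphism and deduce properness. Because $\psi\colon G_\cx\times\mathscr{S}_{\g_\cx}\to\g^{\rm reg}_\cx$, $(g,X)\mapsto\Ad(g)X$, is a surjective submersion (Kostant transversality), it has local holomorphic sections; applying them coordinatewise and normalizing the discarded factor to $g_1=1$ yields, over a small $V\subset Q$, a holomorphic section $s$ of $\bar\Phi$ and hence a holomorphic map $G_\cx\times V\to\bar\Phi^{-1}(V)$, $(\gamma,q)\mapsto\gamma\cdot\pi(s(q))$. By the previous paragraph this is a bijection onto the open set $\bar\Phi^{-1}(V)$, and since both sides are now complex manifolds of the same dimension $\dim G_\cx+\dim Q=\dim U^{b,b^\prime}_{G_\cx}$, an injective holomorphic map between them is automatically a biholomorphism onto its image. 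This local trivialization shows that $U^{b,b^\prime}_{G_\cx}\to Q$ is a holomorphic principal $G_\cx$-bundle, so the first-factor action is free and proper and $U^{b,b^\prime}_{G_\cx}/G_\cx\cong Q$; the same argument applies verbatim to any of the $n$ factors. The step I expect to be the main obstacle is the smoothness assertion of the second paragraph, namely that $R$ is a smooth submanifold with submersive projection; concretely this rests on smoothness of the centralizer group scheme of regular elements, which is the precise point where the disconnectedness warned about in the text, and the passage to the closure $R$, must be controlled.
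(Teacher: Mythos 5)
Your argument is correct and shares the paper's overall skeleton (freeness of the $G_\cx$-action, identification of the quotient with $Q$, smoothness of $Q$ as a fibre product over the Kostant section, and the principal-bundle structure), but it handles the two delicate points differently. For smoothness of $U^{b,b^\prime}_{G_\cx}$ the paper argues in the order: free action, quotient homeomorphic to the Hausdorff space $Q$, hence proper, hence $U^{b,b^\prime}_{G_\cx}\to Q$ is a principal bundle over the smooth base $Q$ and the total space is smooth. You instead prove smoothness of $U^{b,b^\prime}_{G_\cx}$ first and directly, by exhibiting the relation \eqref{relation} as the orbit relation of a fibrewise free action of the group scheme $\bigl\{(u_i)\in Z_{G_\cx}(X)^{b+b^\prime}:\prod u_i=1\bigr\}$ over $\mathscr{S}_{\g_\cx}$ and invoking Godement's criterion; this buys a self-contained and more robust treatment (the paper's inference ``free with Hausdorff quotient, hence proper'' is quick, and its principal-bundle conclusion tacitly presupposes local triviality, which your local sections of $(g,X)\mapsto\Ad(g)X$ actually supply), at the cost of importing the smoothness of the regular centralizer group scheme over the Kostant section --- a true but nontrivial fact (Kostant, N\^go) that the paper never needs explicitly, relying instead only on the Lie-algebra statement that the $C_P(X)$ span $Z_{\g_\cx}(X)$. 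Your freeness computation and the identification of the $G_\cx$-quotient with $Q$ via $\bar\Phi$ coincide with the paper's observation that a $G_\cx$-orbit over $X$ is the same as a $\bigl(Z_{G_\cx}(X)\bigr)^{b+b^\prime-1}$-orbit on $G_\cx^{b+b^\prime-1}\times\mathscr{S}_{\g_\cx}$. In short: same destination, with your route trading one black box (properness from Hausdorffness) for another (smoothness of the centralizer family), and being the more careful of the two where local triviality is concerned.
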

\begin{proof} We first prove that the action of any $G_\cx$-factor  is free and proper. This is a purely topological statement. %%We view $U^{b,b^\prime}_{G_\cx}$ as $Y/\sim$, where the relation is the one in the previous lemma.  
Without loss of generality, we can consider the first factor $G_\cx$-factor. Suppose that $h\in G_\cx$ stabilises $[1,g_2,\dots,g_{b+b^\prime},X]\in U^{b,b^\prime}_{G_\cx}$. This means that there are $u_1,\dots,u_{b+b^\prime} \in Z_{G_\cx}(X)$
with $\prod u_i=1$ such that $hu_1=1$, $g_iu_i=g_i$ for $i\leq b$, and $u_ig_i=g_i$ for $i>b$. This implies that $h=1$.
\par
Now observe that a $G_\cx$-orbit over $X\in \mathscr{S}_{G_\cx}$  is the same thing as $(Z_{G_\cx}(X))^{b+b^\prime-1}$-orbit on $G_\cx^{b+b^\prime-1}\times \mathscr{S}_{\g_\cx}$, and therefore the (geometric) quotient is homeomorphic to  $Q$. This is clearly Hausdorff, and hence the $G_\cx$-action is proper. 
It follows that the quotient by $G_\cx$ is biholomorphic to $Q$. $Q$ is smooth, since it is the fibred product of $\g_\cx^{\rm reg} \to \mathscr{S}_{G_\cx}$, which is a submersion. Therefore $U^{b,b^\prime}_{G_\cx}$ is a principal bundle over $Q$, hence smooth.
\end{proof}
%%%%%
The symplectic form on  $\bigl(W^{1,0}_{G_\cx}\bigr)^b\times \bigl(W^{0,1}_{G_\cx}\bigr)^{b^\prime}$ is $A_0$-invariant, and hence it descends to  $U^{b,b^\prime}_{G_\cx}$. It is easily written down. For example, on $U^{b,0}_{G_\cx}$
it is given by:
\begin{equation} -\Bigl\langle dX\wedge \bigl(\sum_{i=1}^b g_i^{-1}dg_i\bigr)\Bigr\rangle+\Bigl\langle X,\sum_{i=1}^b g_i^{-1}dg_i\wedge g_i^{-1}dg_i\Bigr\rangle.\label{symplU}\end{equation}
%%%%
\begin{remark}
The moment maps yield a holomorphic map:
$$\Phi:U^{b,b^\prime}_{G_\cx}\longrightarrow \bigl\{ (X,y_1,\dots,y_{b+b^\prime})\in \mathscr{S}_{G_\cx}\times \g_{\cx}^{b+b^\prime}; \; y_i\in O(X)\bigr\}.
$$
The fibre of $\Phi$ at any point is $Z_{G_\cx}(X)$. 
If we represent a vertical vector field by a fundamental vector field generated by $\rho\in Z_{\g_\cx}(X)$, then the
the symplectic form is $\langle \rho \wedge dX\rangle$ plus the sum of Kostant-Kirillov-Souriau forms on  the first $b$ orbits minus the sum of Kostant-Kirillov-Souriau forms on  the last $b^\prime$ orbits. This follows easily from \eqref{symplU}.\label{fibration}
\end{remark}
%%%
%%
\begin{remark} We can extend the construction to include the case $b=b^\prime=0$. The variety  $W^{0,0}_{G_\cx}\simeq U^{0,0}_{G_\cx}$ is the symplectic quotient of  $W^{1,0}_{G_\cx}\times W^{0,1}_{G_\cx}$ by the diagonal action of $G_\cx$. It follows that $W^{0,0}_{G_\cx}$ is isomorphic to 
$$\bigl\{(g,X)\in G^C\times \mathscr{S}_{\g_\cx}; \Ad(g)X=X\bigr\}.$$
\end{remark}

We now have:
\begin{theorem} \begin{itemize}
\item[(i)] $U^{1,1}_{G_\cx}\simeq G_\cx\times \g_\cx^{\rm reg}\subset W^{1,1}_{G_\cx}$.
\item[(ii)] The manifolds $U^{b,b^\prime}_{G_\cx}$ defined above satisfy axioms (A), (B), (D), and (E).
\end{itemize}\label{UABCDE}
\end{theorem}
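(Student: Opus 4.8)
My plan is to derive all four axioms, and the identification in (i), from the two building blocks $W^{1,0}_{G_\cx}$ and $W^{0,1}_{G_\cx}$, exploiting that $U^{b,b^\prime}_{G_\cx}$ is by definition the $A_0$-symplectic quotient of $\bigl(W^{1,0}_{G_\cx}\bigr)^b\times\bigl(W^{0,1}_{G_\cx}\bigr)^{b^\prime}$ and that such quotients may be carried out in stages.

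For (i), Lemma \ref{simple} identifies the zero level set for $b=b^\prime=1$ with $G_\cx\times G_\cx\times\mathscr{S}_{\g_\cx}$, on which the relation \eqref{relation} identifies $(g_1,g_2,X)$ with $(g_1u^{-1},ug_2,X)$, $u\in Z_{G_\cx}(X)$. I would then define
$$\Psi:U^{1,1}_{G_\cx}\longrightarrow G_\cx\times\g_\cx^{\rm reg},\qquad [g_1,g_2,X]\longmapsto\bigl(g_1g_2,\Ad(g_1)X\bigr),$$
which is visibly invariant under this relation. It is a bijection: since the principal Slodowy slice consists of regular elements and meets each regular orbit in exactly one point (Kostant), a pair $(k,y)$ determines $X\in\mathscr{S}_{\g_\cx}$ with $y\in O(X)$, then $g_1$ up to right multiplication by $Z_{G_\cx}(X)$ from $\Ad(g_1)X=y$, and $g_2=g_1^{-1}k$, the residual ambiguity being exactly the relation. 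Under $\Psi$ the two $G_\cx$-moment maps become $y$ and $-\Ad(k^{-1})y$, which are precisely the moment maps of $T^\ast G_\cx$; a direct computation rewriting the descended form \eqref{symplU} (together with the $W^{0,1}_{G_\cx}$-form) via the second expression in \eqref{form1} then shows $\Psi$ is a symplectomorphism onto the regular locus $G_\cx\times\g_\cx^{\rm reg}\subset T^\ast G_\cx\simeq W^{1,1}_{G_\cx}$, compatibly with (C) and (D).

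Axioms (A), (D), (E) are comparatively direct. The $G_\cx^{b+b^\prime}$-action and the $\Sym_b\times\Sym_{b^\prime}$-action on the product both commute with $A_0$ --- the former because $\exp C_P(X)\in Z_{G_\cx}(X)$ makes the moment maps $\Ad(g_i)X$ and $-\Ad(g_i^{-1})X$ invariant under \eqref{P-action}--\eqref{P2-action}, the latter because $A_0$ and the chosen level are permutation-invariant --- preserve the symplectic form, and have $A_0$-invariant moment maps; hence they descend to a Hamiltonian action of $(\Sym_b\times\Sym_{b^\prime})\ltimes G_\cx^{b+b^\prime}$, giving (A). For (D), Lemma \ref{simple} gives $X_i=X$ on the level set, so $P(\mu_i)=P(\Ad(g_i)X)=P(X)=P(\Ad(g_k^{-1})X)=P(-\mu_k^\prime)$. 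For (E), the building-block isomorphism $\phi:W^{1,0}_{G_\cx}\to W^{0,1}_{G_\cx}$ already has the equivariance in (E); since $\theta_{\h_\cx}$ acts as $-1$ on $\h_\cx$ one has $P(\tilde X)=P(X)$ for the transformed slice value, so $\phi$ preserves the $A_{\g_\cx}$-moment map \eqref{nu} and, being symplectic with connected $A_{\g_\cx}$, intertwines the actions \eqref{P-action}, \eqref{P2-action}; applying $\phi$ to one incoming factor thus descends to a symplectomorphism $U^{b,b^\prime}_{G_\cx}\to U^{b-1,b^\prime+1}_{G_\cx}$ with the required equivariance.

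The essential content is (B). I would realise both sides as quotients of the single master space $\bigl(W^{1,0}_{G_\cx}\bigr)^{b+c}\times\bigl(W^{0,1}_{G_\cx}\bigr)^{b^\prime+c^\prime}$: quotienting in stages, $\bigl(U^{b,b^\prime}_{G_\cx}\times U^{c,c^\prime}_{G_\cx}\bigr)/\!\!/G_\cx$ is its quotient by the mutually commuting group $A_0^{(b,b^\prime)}\times A_0^{(c,c^\prime)}\times G_\cx$, whereas $U^{b+c-1,b^\prime+c^\prime-1}_{G_\cx}$ is the quotient of the master space with the matched outgoing/incoming pair $W^{0,1}_{(j)}\times W^{1,0}_{(i)}$ deleted, by $A_0^{(b+c-1,b^\prime+c^\prime-1)}$. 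Working through the relation \eqref{relation} on the zero level sets, the diagonal $G_\cx$-moment map forces $\Ad(g_jh_i)X=X$, and since $X,X^\prime\in\mathscr{S}_{\g_\cx}$ are conjugate they coincide; this is exactly the gluing $\bigl(W^{1,0}_{G_\cx}\times W^{0,1}_{G_\cx}\bigr)/\!\!/G_\cx\simeq\{(w,X):\Ad(w)X=X\}$ recorded in the last Remark. Gauge-fixing the matched $G_\cx$ and absorbing the resulting $w\in Z_{G_\cx}(X)$ into the centraliser factors of \eqref{relation} should merge the two relations $\prod u_i=1$ and $\prod v_m=1$ into the single relation defining $U^{b+c-1,b^\prime+c^\prime-1}_{G_\cx}$ on the surviving $b+c-1$ incoming and $b^\prime+c^\prime-1$ outgoing factors; a dimension count (both sides equal $(b+c+b^\prime+c^\prime-2)\dim G_\cx+(4-b-c-b^\prime-c^\prime)r$) confirms the bookkeeping. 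The main obstacle is precisely this last step: tracking the descended form \eqref{symplU} through the $G_\cx$-reduction and checking it agrees with the form on the target --- most conveniently through the fibration of Remark \ref{fibration}, where the form is the difference of Kostant--Kirillov--Souriau forms plus $\langle\rho\wedge dX\rangle$ --- while controlling the non-proper, disconnected-centraliser GIT issues through Grauert's theorem exactly as in the construction of the $U^{b,b^\prime}_{G_\cx}$ themselves.
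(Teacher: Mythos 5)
Your proposal is correct and follows essentially the same route as the paper: part (i) uses the identical map $[g_1,g_2,X]\mapsto(g_1g_2,\Ad(g_1)X)$, axioms (A), (D), (E) are read off from the construction, and (B) rests on the same computation — the diagonal moment map forces the two slice elements to coincide and the matched group elements to multiply into the centraliser, after which one gauge-fixes both to the identity and concatenates the remaining factors. Your ``reduction in stages'' packaging and the dimension count are only presentational additions; the steps you flag as remaining (merging the two centraliser relations, matching the descended symplectic forms) are exactly the ones the paper likewise dispatches as straightforward.
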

\begin{proof} $U^{1,1}_{G_\cx}$ is isomorphic to the quotient of
$$\bigl\{(g_1,g_2,X_1,X_2)\,;\, g_1,g_2\in G_\cx,\; X_1=X_2\in  \mathscr{S}_{\g_\cx}\bigr\}$$
by the relation 
$$(g_1,g_2,X)\sim \bigl(g_1u,u^{-1}g_2,X_1,X_2\bigr), \quad u\in Z_{G_\cx}(X).$$
The map 
$$ (g_1,g_2,X_1,X_2)\mapsto \bigl( g_1g_2, \Ad(g_1) X_1\bigr)$$
is constant on equivalence classes and induces an isomorphism $U^{1,1}_{G_\cx}\simeq G_\cx\times \g_\cx^{\rm reg}$.

We now prove (ii). Axioms (A), (D), and (E) are obvious from the construction. Let us index the 
the $G_\cx$-factors in $U^{b,b^\prime}_{G_\cx}$ by $i,i^\prime$, $i=1,\dots,b$, $i^\prime =1,\dots,b^\prime$, and similarly by $j,j^\prime$ on $U^{c,c^\prime}_{G_\cx}$. We consider the symplectic quotient of $U^{b,b^\prime}_{G_\cx}\times U^{c,c^\prime}_{G_\cx}$ by $G_\cx$ acting diagonally on the $p^\prime$-factor on $U^{b,b^\prime}_{G_\cx}$ and on $q$-th factor on $U^{c,c^\prime}_{G_\cx}$ (and trivially on all the other factors). The moment map at
$$ (m,\tilde m)=\bigl([g_1,\dots,g_b,g_{1^\prime},\dots,g_{b_\prime},X],[h_1,\dots,h_c,h_{1^\prime},\dots,h_{c_\prime},Y]\bigr)\in U^{b,b^\prime}_{G_\cx}\times U^{c,c^\prime}_{G_\cx}$$
is $-\Ad(g_{p^\prime}^{-1})X+\Ad(h_q)Y$, and so the level set of the moment map is given by $X=\Ad(g_{p^\prime}h_q)Y$. Since $X,Y\in \mathscr{S}_{\g_\cx}$, we must have $X=Y$ and $g_{p^\prime}h_q\in Z_{G_\cx}(Y)$. Quotienting by $G_\cx$ allows us to make $g_{p^\prime}=1$, and then $h_q\in Z_{G_\cx}(Y)$. Such an $\tilde m$ is $R$-equivalent to one with $h_q=1$. We send this pair of representatives to 
$$[g_1,\dots,g_b,g_{1^\prime},\dots,\widehat{g_{p^\prime}},\dots, g_{b_\prime},h_1,\dots,\widehat{h_q},\dots, h_c,h_{1^\prime},\dots,h_{c_\prime},Y]\in U^{b+c-1,b^\prime+c^\prime-1}_{G_\cx}.$$
It is straightforward to check that $R$-equivalence classes are mapped to $R$-equivalence classes, and so
 the symplectic quotient is isomorphic to $U^{b+c-1,b^\prime+c^\prime-1}_{G_\cx}$.
 % Suppose then $m\sim \tilde{m}$. Writing  $\tilde{m}$  as $\tilde g_1,\tilde h_1$, etc., and setting $u_i=\tilde{g}_i^{-1} g_i$, $u_{i^\prime}=g_{i^\prime}\tilde{g}_{i^\prime}^{-1}$, $v_j=\tilde{h}_j^{-1} h_j$, $v_{j^\prime}=h_{j^\prime}\tilde{h}_{j^\prime}^{-1}$, we have
% $$m\sim \tilde{m}\iff u_i,u_{i^\prime},v_j,v_{j^\prime}\in Z_{G_\cx}(Y),\enskip \prod u_i\prod u_{i^\prime}=1=\prod v_j\prod v_{j^\prime}.$$
%After acting by $z$ and $z^{-1}$, $z\in Z_{G_\cx}(Y)$ on two neighbouring $G_\cx$-factors, we can assume that, in addition, $u_{p^\prime}=v_q^{-1}$. But then
%$$ \prod u_i\prod_{i\neq p^\prime} u_{i^\prime}\prod_{j\neq q} v_j\prod v_{j^\prime}=u_{p^\prime}^{-1} v_q=1,$$ and, hence the images of $m$ and $\tilde m$ in $U^{b+c-1,b^\prime+c^\prime-1}_{G_\cx}$ are equivalent.
\end{proof}

\begin{remark} We can rewrite the symplectic form \eqref{symplU} as 
\begin{equation} \sum_{i=1}^b\bigl\langle dg_ig_i^{-1} \wedge d(\Ad (g_i)X)  \bigr\rangle.\label{symplW}\end{equation}
The $\g_\cx$-valued functions $\Ad (g_i)X$ are the moment maps for the $G_\cx$-factors acting on $U^{b,0}_{G_\cx}$.
Thus, $W^{b,0}_{G_\cx}$ (and similarly $W^{b,b^\prime}_{G_\cx}$) must be an extension of $U^{b,0}_{G_\cx}$ on which this form remains nondegenerate, but the values of the moment maps are no longer required to be regular. Moreover, for $b=2$ the two moment maps are adjoints of each other, but for $b\geq 3$ this no longer can be the case, if axioms (B) and (C) are to be satisfied.
\label{weird}
\end{remark}

\section{The case $G_\cx=SL(k,\cx)$ }

We shall now discuss in detail the $A_{k-1}$-case. It is actually simpler to describe the case when $G_\cx=GL(k,\cx)$;  the 
manifolds $U^{b,b^\prime}_{G_\cx}$ for a simple $G_\cx$ of type $A_{k-1}$ can be then obtained by taking submanifolds and finite quotients.  One reason why $GL(k,\cx)$ is simpler is that the centraliser of any regular element is connected, so that there is no necessity to replace the group $A_0$ by the relation \eqref{relation}. 
\par
We therefore fix $G_\cx=GL(k,\cx)$ and write $U^{b,b^\prime}_k$, $W^{b,b^\prime}_k$, for the corresponding varieties.
Let $S_k^{b,b^\prime}$ be the manifold of rank $1$ tensors in $\bigl(\cx^k\bigr)^{\otimes b}\otimes\bigl(\cx^k\bigr)^{\ast\otimes b^\prime}$. It is biholomorphic to $(\cx^k\backslash\{0\})^{b+b^\prime}/T_0$, where 
$$T_0=\Bigl\{(\lambda_1,\dots,\lambda_{b+b^\prime})\in (\cx^\ast)^{b+b^\prime}\,;\, \prod_{i=1}^b\lambda_i\prod_{i=b+1}^{b+b^\prime}\lambda_i^{-1}=1\Bigr\}.$$
Consider now $X_{k}^{b,b^\prime}=S_k^{b,b^\prime}\times \cx$ and denote by $\pi$ the projection onto the second factor. We recall \cite[Ch.\ 6]{AH} the notion of a {\em transverse} Hilbert scheme of points. If $\pi:Z\to X$ is a surjective holomorphic map between complex manifolds or varieties, then the transverse Hilbert scheme $\Hilb^k_\pi(Z)$ of $k$ points in $Z$ is an open subset of $\Hilb^k(Z)$ consisting of those $D$ for which $\pi|_D$ is a scheme-theoretic isomorphism onto its image.

The $GL(k,\cx)^{b+b^\prime}$-action on $S_k^{b,b^\prime}$ induces a corresponding action on $\Hilb^k(X_{k}^{b,b^\prime})$. 
We shall say that a $0$-dimensional subscheme $D$ of  $X_{k}^{b,b^\prime}$ is {\em nondegenerate} if the stabiliser of $D$ in each factor   of $\prod_{i=1}^{b+b^\prime} GL(k,\cx)$  is trivial.

\begin{theorem} $U^{b,b^\prime}_k$ is equivariantly biholomorphic to an open subset of the transverse Hilbert scheme $\Hilb^k_\pi(X_{k}^{b,b^\prime})$ consisting of nondegenerate subschemes. \label{tHsS}
\end{theorem}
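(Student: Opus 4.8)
The plan is to write down an explicit algebraic map $\Psi$ from $U^{b,b^\prime}_k$ to the transverse Hilbert scheme, to construct its inverse algebraically, and to deduce biholomorphicity from the fact that both spaces are smooth of the same dimension once the set-theoretic bijection onto the nondegenerate locus is in place. The starting point is that for $G_\cx=GL(k,\cx)$ the slice $\mathscr{S}_{\g_\cx}$ may be taken to consist of companion matrices, so that $X\in\mathscr{S}_{\g_\cx}$ is recorded by its characteristic polynomial $\chi_X$ and $\cx^k$ becomes the cyclic $\cx[X]$-module $\cx[\lambda]/(\chi_X)$. In this companion model there is a \emph{universal eigenvector} $w(\lambda)=(1,\lambda,\dots,\lambda^{k-1})^T$ satisfying $p(X)w(\lambda)\equiv p(\lambda)w(\lambda)\pmod{\chi_X(\lambda)}$ for every polynomial $p$, and its components form a basis of $\cx[\lambda]/(\chi_X)$. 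I would also record, from Lemma \ref{simple} and Proposition \ref{smooth} together with the connectedness of centralisers in $GL(k,\cx)$, that $U^{b,b^\prime}_k$ is simply $G_\cx^{b+b^\prime}\times\mathscr{S}_{\g_\cx}$ modulo the equivalence $g_i\mapsto g_iz_i$ for $i\le b$ and $g_i\mapsto z_ig_i$ for $i>b$, with $z_i=p_i(X)\in Z_{G_\cx}(X)=\cx[X]^\times$ and $\prod_i z_i=1$; here I use that $P\mapsto C_P(X)=\nabla P(X)$ is a linear isomorphism onto $Z_{\g_\cx}(X)$ for regular $X$.

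Next I would define $\Psi$ scheme-theoretically on the level set $Y=G_\cx^{b+b^\prime}\times\mathscr{S}_{\g_\cx}$ as the closed embedding of $\operatorname{Spec}\cx[\lambda]/(\chi_X)\hookrightarrow\cx$ given by
\[
\lambda\longmapsto\Bigl(\bigl[g_1w(\lambda)\otimes\cdots\otimes g_bw(\lambda)\otimes g_{b+1}^{-1}w(\lambda)\otimes\cdots\otimes g_{b+b^\prime}^{-1}w(\lambda)\bigr],\ \lambda\Bigr)\in S_k^{b,b^\prime}\times\cx,
\]
all entries reduced modulo $\chi_X$. Since each $g_i$ is invertible and $w(\lambda)\neq 0$, every slot is a nonzero vector, so the class genuinely lies in $S_k^{b,b^\prime}$; the resulting length-$k$ subscheme is transverse to $\pi$ by construction, as $\pi$ restricted to it is the inclusion of $\operatorname{Spec}\cx[\lambda]/(\chi_X)$. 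To see that $\Psi$ descends to $U^{b,b^\prime}_k$ I would feed in the equivalence: right multiplication $g_i\mapsto g_ip_i(X)$ scales the $i$-th slot ($i\le b$) by $p_i(\lambda)$, while left multiplication $g_i\mapsto p_i(X)g_i$ scales the $i$-th slot ($i>b$) by $p_i(\lambda)^{-1}$, using $g_i^{-1}p_i(X)^{-1}w(\lambda)\equiv p_i(\lambda)^{-1}g_i^{-1}w(\lambda)$. Under the constraint $\prod_i p_i(\lambda)\equiv 1$ this is exactly the action of an element of $T_0$, so the point of $S_k^{b,b^\prime}$ is unchanged; equivariance under $G_\cx^{b+b^\prime}$ is immediate, since a translation in the $i$-th factor multiplies $v_i$ by the corresponding element and fixes the others.

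It remains to identify the image and build the inverse. Given a nondegenerate transverse $D$, its projection $\pi(D)$ is a length-$k$ subscheme of $\cx$ with ideal $(\chi)$, recovering $X$ as the companion matrix of $\chi$; the tensor data along $\operatorname{Spec}\cx[\lambda]/(\chi)$ gives, after a $T_0$-lift, vector-valued functions $v_i\colon\operatorname{Spec}\cx[\lambda]/(\chi)\to\cx^k\setminus\{0\}$. The assignment $A\mapsto\bigl(\lambda\mapsto Aw(\lambda)\bmod\chi\bigr)$ is a linear isomorphism from $\Mat_k(\cx)$ onto the space of such functions, because the components $1,\lambda,\dots,\lambda^{k-1}$ form a basis of $\cx[\lambda]/(\chi)$; hence each $v_i$ determines a unique $g_i$ (resp.\ $g_i^{-1}$) with $g_iw(\lambda)\equiv v_i$, the $T_0$-ambiguity corresponding precisely to the $Z_{G_\cx}(X)$-equivalence. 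I would then check that nondegeneracy of $D$ is equivalent to all these $g_i$ being invertible: if $h$ fixes the $i$-th slot of $D$ then $hg_iw(\lambda)\equiv g_iw(\lambda)$ forces $g_i^{-1}hg_i=1$ when $g_i$ is invertible, whereas a singular $g_i$ confines the $v_i(\lambda)$ to a proper subspace that admits a nontrivial pointwise stabiliser. Thus $\Psi$ is a bijection onto the open nondegenerate locus with algebraic inverse, and since both spaces are smooth of dimension $k(b+b^\prime)(k-1)+2k$, it is a biholomorphism.

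The step I expect to be the main obstacle is making this identification rigorous over the locus where $X$ is not regular semisimple, i.e.\ where $\operatorname{Spec}\cx[\lambda]/(\chi_X)$ is non-reduced: on the dense open set of distinct eigenvalues $\Psi$ is just the classical spectral correspondence (one rank-one tensor over each eigenvalue), but to conclude that it is an isomorphism of complex spaces — in particular that the family $D$ is flat and agrees with the universal family on $\Hilb^k_\pi(X_{k}^{b,b^\prime})$ — I would have to use the cyclic-module description above to control the non-reduced fibres and verify the statement at the level of functors of points, together with a careful bookkeeping of the dual-representation conventions governing the $b^\prime$ covector factors.
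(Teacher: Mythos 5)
Your proposal is correct, and it encodes the same underlying correspondence as the paper's proof: your matrix $g_i$ with $g_iw(\lambda)\equiv v_i(\lambda)$ is exactly the paper's matrix whose columns are the Taylor coefficients $x_i^m$ of the local section $x_i(\epsilon)=g_iw(\epsilon)$, and your $T_0$-of-truncated-units is the paper's jet group. But the execution is genuinely different in three respects. First, by modelling $\mathscr{S}_{\gl(k,\cx)}$ by companion matrices and working over the single Artinian ring $\cx[\lambda]/(\chi_X)$, you treat all support points of $D$ at once; the paper instead reduces to the case $\pi(D)=k0$ via a Jordan normal form and then reassembles a general $D$ through the decomposition $\cx^k\simeq\bigoplus V_{ji}$ and a choice of ordering of Jordan blocks, which is the most delicate bookkeeping in its argument and which your CRT-style formulation avoids entirely. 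Second, you build the map in the direction $U^{b,b^\prime}_k\to\Hilb^k_\pi$ and invert it explicitly via the linear isomorphism $A\mapsto Aw(\lambda)\bmod\chi$ of $\Mat_k(\cx)$ onto $(\cx[\lambda]/(\chi))^k$, so biholomorphicity is immediate; the paper only produces a holomorphic bijection and must invoke normality plus Zariski's main theorem (your closing appeal to equal dimensions is then superfluous -- a holomorphic inverse already suffices). Third, your equivalence of nondegeneracy with invertibility of the $g_i$ (singular $g_i$ forces $v_i$ into a hyperplane, fixed pointwise by a nontrivial transvection) is an explicit argument where the paper simply asserts linear independence of the $x_j^m$. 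One reassurance and one caveat: the ``main obstacle'' you flag at the end is largely already dispatched by your own setup, since $\pi_\ast\sO_D\simeq\cx[\lambda]/(\chi_X)$ is visibly locally free of rank $k$ over the base, so flatness and agreement with the universal family on the non-reduced locus come for free from the cyclic-module description; on the other hand the dual-representation conventions for the $b^\prime$ covector slots (whether the slot is $g_i^{-1}w$ or $(g_i^T)^{-1}w$, and how this matches the sign of the moment map $-\Ad(g^{-1})X$ on $W^{0,1}_{G_\cx}$) do need to be pinned down, though the paper itself dismisses the $b^\prime>0$ case as an obvious modification, so you are not below its standard there.
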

\begin{proof} 
We shall construct an isomorphism from $U^{\rm non-deg}\subset \Hilb^k_\pi(X_{k}^{b,b^\prime})$ to $U^{b,b^\prime}_k$, where $U^{\rm non-deg}$ is the subset of nondegenerate subschemes.
For the sake of transparency of the argument,  we shall assume that $b^\prime=0$; the modifications needed for the general case are obvious. A point $D$ of $\Hilb^k_\pi(X_{k}^{b,0})$ consists of a divisor $\sum k_iz_i$ in $\cx$, $\sum k_i=k$, together with a $(k_i-1)$-jet of a section of $\pi$ at each $z_i$. We consider first the case when $\pi(D)=k0$. Then $D$ is a $(k-1)$-jet of a section of $\pi$ at $0$, i.e.\ the truncation $p(\epsilon)$ of $x_1(\epsilon)\otimes \cdots \otimes x_b(\epsilon)$ to order $k-1$, where $x_j(\epsilon)=\sum_{m=0}^{k-1} x_j^m\epsilon^{m-1}$,  $x_j^m\in \cx^k$ and $x_j^0\neq 0$. Furthermore, two collections of such $\cx^k$-valued polynomials give the same $p(\epsilon)$ if and only if they differ by an element of the jet group
$$ \Bigl\{\bigl(\lambda_1(\epsilon),\dots,\lambda_b(\epsilon)\bigr)\,;\, \prod_{j=1}^b\lambda_j(\epsilon)=1\Bigr\},$$
where each $\lambda_j(\epsilon)$ is a polynomial of degree at most $k-1$. This group acts by multiplying $x_j(\epsilon)$ on the right.
Since we assume that $D$ is nondegenerate, the vectors $x_j^m$, $m=0,\dots, k-1$, are linearly independent. If we write $g_j$ for the invertible matrix with columns  $x_j^m$, then action $x_j(\epsilon)\mapsto x_j(\epsilon)\lambda(\epsilon)$ followed by truncation corresponds to the right multiplication of $g_i$ by $\lambda(J)$, where $J$ is the Jordan block with ones above the diagonal. Therefore the map $$D\mapsto \bigl(J,[g_1,\dots,g_b])\in \bigl(\mathscr{S}_{\gl(k,\cx)}\times GL(k,\cx)^b\bigr)/A_0$$
is well-defined and a biholomorphism between $\pi^{-1}(k0)$ and the subset of $U^{b,0}_k$ where $S=J$.
\par
Now consider a general $D$, which can be written as $D=\sum D_i$ with $\pi(D_i)=k_iz_i$. In other words $D$ is the zero set of the polynomial $q(z)=\prod(z-z_i)^{k_i}$ which corresponds to an element $S$ of $\mathscr{S}_{\gl(k,\cx)}$. Let $J(S)$ be a Jordan normal form of $S$.
Since the stabiliser of $D$ is trivial in each factor $G_\cx$, we have a decomposition $\cx^k\simeq\bigoplus V_{ji}$, $\dim V_{ji}=k_i$, for each $j=1,\dots,b$, such that $D_i$ is a point in $\Hilb^{k_i}_\pi(Y_i)$, where $Y_i=\cx \times \prod_{j=1}^b V_{ji}$. For each $D_i$ we obtain, owing to the special case above, an element $G_i$ of $\prod_{j=1}^b GL(V_{ji})/A_{0,i}$, where $A_{0,i}$ acts as in \eqref{P-action} with $X$ being the corresponding Jordan block of $J(S)$. These $G_i$ combine to give an element $G$ of $GL(k,\cx)^b/A_0$. Moreover, the choice of a Jordan normal form corresponds precisely to an ordered choice 
$(z-z_{i_1})^{k_{i_1}},\dots,(z-z_{i_s})^{k_{i_s}}$ of factors of $q(z)$. Thus we obtain a $GL(k,\cx)^{b}$-equivariant
holomorphic bijection from $U^{\rm non-deg}\subset\Hilb^k_\pi(X_{k}^{b})$ to $U^{b}_k$. Since both spaces are normal, Zariski's main theorem implies that this map is a biholomorphism.
\end{proof}
%%%%%%
\begin{example} For $b=1$, $ \Hilb^k_\pi(X_{k}^{1,0})$ is the space of based rational maps from $\oP^1$ to $\oP^k$ of degree $k$ \cite[p.53]{AH}. $U^{1,0}_k$ is then the subset of based {\em full} rational maps \cite{Craw}. 
\end{example}
\begin{remark} It is perhaps worth pointing out that $W^{1,1}_k\simeq T^\ast GL(k,\cx)$ is the space of certain based rational maps from $\oP^1$ to $\Gr_k(\cx^{2k})$ \cite{Masz}. Thus $W^{1,0}_k, W^{0,1}_k$, and $W^{1,1}_k$ are certain strata of charge $k$ moduli spaces of Euclidean monopoles with minimal symmetry breaking, with gauge group $SU(3)$ in the case of $W^{1,0}_k$ and $W^{0,1}_k$, and $SU(4)$ in the case of $W^{1,1}_k$. Is there a similar interpretation of $W^{2,1}_k$ and $W^{1,2}_k$ as strata of charge $k$ $SU(5)$-monopoles with minimal symmetry breaking?
\end{remark}
%%%%%%
\begin{remark} It follows that $U^{b,b^\prime}_{SL(k,\cx)}$ is isomorphic to a subset of $U^{b,b^\prime}_{GL(k,\cx)}$ 
consisting of ``centred" $D$ (i.e.\ if $\pi(D)=\sum k_iz_i$ as a divisor, then $\sum k_iz_i=0$ as the sum of complex numbers),
and such that the element $G=[g_1,\dots,g_{b+b^\prime}]$ of $GL(k,\cx)^{b+b^\prime}/A_0$, obtained in the above proof, satisfies $\prod\det g_i=1$.
\par
Observe also that $U^{b,b^\prime}_{PGL(k,\cx)}$ is a symplectic quotient of $U^{b,b^\prime}_{GL(k,\cx)}$ by
by the diagonal action of $\cx^\ast=\bigl\{(\lambda I,\dots,\lambda I)\in GL(k,\cx)^{b+b^\prime}\bigr\}$. 
\end{remark}

\subsection{The Fitting-transverse Hilbert scheme}

We now consider a larger subset of $\Hilb^k(X_{k}^{b,b^\prime})$,  consisting of subschemes $D$ such that  its Fitting image $\pi_{\rm Fitt}(D)$ has length $k$. This means that if $D$ has locally length $l$ and is set-theoretically supported in a single fibre $\pi^{-1}(z)$, then (locally) $\pi_\ast\sO_D\simeq \bigoplus_{i=1}^r \sO_{l_iz}$ with $\sum_{i=1}^r l_i=l$. Yet another way of saying this is that $D=\bigsqcup_{i=1}^t D_i$ with each $D_i$ (set-theoretically) supported at a single point and transverse to $\pi$. Such subschemes are l.c.i., and therefore this open subset is smooth. We define  $F_k^{b,b^\prime}$ to be the subset of this open set consisting of {\em locally nondegenerate} subschemes, i.e. such that the stabiliser of $D$ in each factor   of $\prod_{i=1}^{b+b^\prime} GL(k,\cx)$  is finite.
%%%
\par
We shall now show that $F^{1,0}_k$ (and of course $F_k^{0,1}$) is a Hamiltonian presymplectic manifold, i.e.\ it has a closed $2$-form and a $G$-equivariant moment map. For an introduction to presymplectic manifolds see \cite{Bot}.
%%%%%%
\begin{proposition} The symplectic form on $W_k^{1,0}$ extends to a closed form on $F_k^{1,0}$. The action of $GL(k,\cx)$ is locally free and Hamiltonian, and the moment map induces a bijection between $GL(k,\cx)$-orbits in $F_k^{1,0}$ and adjoint orbits in $\gl(k,\cx)$.\label{F10}
\end{proposition}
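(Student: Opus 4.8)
The plan is to package the whole construction into the pushforward sheaf $E=\pi_\ast\sO_{\sD}$, where $\sD$ is the universal subscheme over $F_k^{1,0}$. On $F_k^{1,0}$ the map $\pi|_D$ is finite and flat of degree $k$, so $E$ is a holomorphic rank $k$ vector bundle; multiplication by the fibre coordinate $z$ is a holomorphic endomorphism $S_E\in\End(E)$, and restricting the $k$ ambient coordinates of $\cx^k\supset S_k^{1,0}$ to $D$ gives a canonical bundle map $\phi\colon\underline{\cx^{k}}\to E$. The first and central point is that local nondegeneracy of $D$ is equivalent to $\phi$ being an isomorphism. Indeed, writing $D=\bigsqcup_i D_i$ with $D_i$ a length $l_i$ curvilinear jet spanned by $x_i^0,\dots,x_i^{l_i-1}\in\cx^k$, the stabiliser of $D$ in $GL(k,\cx)$ must fix each of these vectors (for $b=1$ the jet group in the proof of Theorem \ref{tHsS} is trivial, so there is no rescaling freedom); hence the stabiliser is finite exactly when the $k$ vectors $\{x_i^m\}$ form a basis, i.e.\ when $\phi$ is invertible. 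Choosing a local holomorphic frame $\psi$ of $E$ and putting $g=\phi^{-1}\psi\in GL(k,\cx)$ and $S=\psi^{-1}S_E\psi\in\gl(k,\cx)$ produces a holomorphic map $\Psi\colon F_k^{1,0}\to T^\ast GL(k,\cx)\simeq GL(k,\cx)\times\gl(k,\cx)$; a change of frame $\psi\mapsto\psi a$ sends $(g,S)\mapsto(ga,\Ad(a^{-1})S)$, the cotangent lift of a right translation, so $\Psi$ is canonical up to symplectomorphism.

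Granting this, the extension statement is immediate. Formula \eqref{form1} is, up to sign, the canonical symplectic form of $T^\ast GL(k,\cx)$ in the left trivialisation, and as such it is holomorphic and closed for every $S\in\gl(k,\cx)$, not merely for $S$ in the slice. Therefore $\Psi^\ast\omega_{\mathrm{can}}$ is a closed holomorphic $2$-form on $F_k^{1,0}$, and on the open set $W_k^{1,0}$, where $S_E$ is regular and $\phi$ may be compared with the cyclic $z$-power frame, it restricts to \eqref{form1}; this is the required extension, and closedness needs no separate argument. Nondegeneracy is not asserted, which is consistent with $F_k^{1,0}$ being only presymplectic: the form degenerates along the directions tangent to the enlarged centralisers appearing over the non-regular locus. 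The $GL(k,\cx)$-action is locally free because, by the equivalence above, the stabiliser of any $D$ reduces to the finite group permuting identical clusters, and the moment map is the restriction of the canonical one for left translations, $\mu(D)=\Ad(g)S=\phi^{-1}\circ S_E\circ\phi$, a holomorphic $GL(k,\cx)$-equivariant $\gl(k,\cx)$-valued function.

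It remains to read off $\mu$ on orbits. Under $\phi$ the fibre $E_D$ becomes $\cx^k$ equipped with the $\cx[z]$-module structure in which $z$ acts by $\mu(D)$; thus the Jordan type of $\mu(D)$ is precisely the family of cluster lengths $l_i$ placed at the eigenvalues $\pi(D_i)$. For surjectivity onto adjoint orbits I would, given $Y\simeq\bigoplus_i J_{l_i}(z_i)$, take $D$ to consist of disjoint curvilinear clusters of lengths $l_i$ based at jets in distinct coordinate directions lying over the points $z_i$; then $\phi$ is an isomorphism and $\mu(D)=Y$. For injectivity, if $\mu(D_1)$ and $\mu(D_2)$ lie in one orbit I act by $GL(k,\cx)$ to arrange $\mu(D_1)=\mu(D_2)=Y$. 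The identity $\mu(D)=Y$ says exactly that $\phi$ is a $\cx[z]$-module isomorphism from $(\cx^k,Y)$ onto $\sO_D$; since $(\cx^k,Y)\simeq\bigoplus_i\cx[z]/(z-z_i)^{l_i}$ is curvilinear, this reconstructs $D$ uniquely up to such an isomorphism, that is, up to $Z_{GL(k,\cx)}(Y)$. Hence $\mu^{-1}(Y)\cap F_k^{1,0}$ is a single $Z_{GL(k,\cx)}(Y)$-orbit, and $\mu$ descends to a bijection between $GL(k,\cx)$-orbits and adjoint orbits.

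The step I expect to demand the most care is the one hidden in $\Psi$: one must verify that $E$, $S_E$ and $\phi$ really constitute a holomorphic family over the whole smooth space $F_k^{1,0}$ --- in particular across the walls where the partition $\{l_i\}$, or the coincidence pattern of the $z_i$, jumps --- and that the pair $(g,S)$, a priori defined only modulo the finite cluster-permutation group, yields a single well-defined pullback form. Both facts follow from flatness of the universal subscheme and the translation-invariance of $\omega_{\mathrm{can}}$, but they carry the real content: once they are in place, the inclusion $W_k^{1,0}\subset F_k^{1,0}$ is harmless precisely because $\phi$, unlike the cyclic $z$-power frame used to write \eqref{form1}, stays invertible everywhere on $F_k^{1,0}$.
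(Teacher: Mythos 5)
Your sheaf-theoretic setup is attractive and much of it is sound: the identification of local nondegeneracy with invertibility of $\phi$, the frame-independent moment map $\mu(D)=\phi^{-1}\circ S_E\circ\phi$, and the orbit bijection via the $\cx[z]$-module structure of $\pi_\ast\sO_D$ all check out and are essentially the paper's argument in disguise (the paper's $G(D)$ and $J(D)$ are exactly $\phi$ and $S_E$ written in the basis $\{(z-z_i)^m\}$ of $\bigoplus_i\cx[z]/(z-z_i)^{l_i}$). The gap is in the step you yourself flag as the delicate one, and your proposed resolution of it is incorrect. A change of local frame $\psi\mapsto\psi a$, with $a$ a \emph{varying} holomorphic map into $GL(k,\cx)$, is not the cotangent lift of a right translation; that lift is a symplectomorphism of $T^\ast GL(k,\cx)$ only for constant $a$. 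Writing $\theta=\langle S,g^{-1}dg\rangle$ for the canonical $1$-form in the left trivialisation, one computes $\Psi'^\ast\theta-\Psi^\ast\theta=\langle S,\,da\,a^{-1}\rangle$, hence $\Psi'^\ast\omega_{\rm can}-\Psi^\ast\omega_{\rm can}=\pm\,d\langle S,\,da\,a^{-1}\rangle$, which contains the term $\pm\langle dS\wedge da\,a^{-1}\rangle$ and is generically nonzero. Already for $k=1$, where $F_1^{1,0}=(\cx\setminus\{0\})\times\cx$ with coordinates $(u,z)$, $E$ trivial, $S_E=z$ and $\phi=u$, the frame $\psi=1$ gives $\Psi^\ast\omega_{\rm can}=\pm\,d\log u\wedge dz$ while $\psi=a(u,z)$ shifts this by $\mp\,d\log a\wedge dz\neq 0$. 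So ``translation-invariance of $\omega_{\rm can}$'' does not make $\Psi^\ast\omega_{\rm can}$ frame-independent, and as stated your closed form is not well defined; for the same reason the claimed agreement with \eqref{form1} on $W^{1,0}_k$ only holds for a particular frame.

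The repair --- and this is precisely where the paper's (admittedly terse) proof puts its content --- is that $E=\pi_\ast\sO_{\sD}$ carries a distinguished frame, not an arbitrary one: transversality of each cluster $D_i$ to $\pi$ gives a canonical isomorphism $\sO_{D_i}\simeq\cx[z]/(z-z_i)^{l_{i}}$, and the monomial basis in $z-z_i$ is the frame in which $\phi$ becomes $G(D)$ and $S_E$ becomes the Jordan form $J(D)$. The residual ambiguity is a locally constant finite permutation of identical blocks, i.e.\ $(g,S)\mapsto(g\sigma,\Ad(\sigma^{-1})S)$ with $\sigma$ locally constant, under which \eqref{form1} genuinely is invariant. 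What then still needs checking --- and what you would have to add --- is that this distinguished frame varies holomorphically across the loci where clusters split or the points $z_i$ collide; this is the role of the paper's observation that $J(D)$ acquires no additional nilpotent parts on a neighbourhood of $D$. With the distinguished frame in place of an arbitrary $\psi$, the rest of your argument goes through and coincides with the paper's.
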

\begin{proof} 
Let $D\in F_k^{1,0}$. We can write it as $D=\bigcup_{i=1}^s  D_i$ with $\pi_\ast\sO_{D_i}=\bigoplus_{j=1}^{r_i} \sO_{l_{ij} z_i}$, where $z_i\in \cx$ are distinct and $\sum_{ij}l_{ij}=k$.  We can associate to $D$ a $k\times k$ matrix
in Jordan normal form:
\begin{equation} J(D)=\bigoplus_{i=1}^s\bigoplus_{j=1}^{r_i} J_{z_i,l_{ij}}.\label{Jordan}\end{equation}
 Moreover we can associate to $D$ a collection $(u_1,\dots,u_k)$ of $k$ nonzero vectors in $\cx^k$, determined up to the subgroup of $\Sym_k$ permuting identical Jordan blocks, and obtained as in the proof of Theorem \ref{tHsS}, and arranged in the order corresponding to \eqref{Jordan}. Since $D$ is locally nondegenerate, $G(D)=(u_1,\dots,u_k)$ is an invertible matrix.
 Observe that any $D\in F_k^{1,0}$ has a neighbourhood where $J(D)$ does not acquire additional nilpotent parts, and, therefore, 
the map  $\mu:F_k^{1,0}\to \gl(k,\cx)$, $\mu(D)=G(D)J(D)G(D)^{-1}$, is well-defined and holomorphic.
\par
Local coordinates near $D$ are given by the diagonal part of $J(D)$ and  $G(D)$. Therefore $T_DF_k^{1,0}$ is the direct sum of its 
subspace $\g^{\rm v}$ spanned by fundamental vector fields, and a subspace $P$ isomorphic to the tangent space to the diagonal matrices at $J(D)$. Let $v,v^\prime \in TF^{1,0}_k$ which we decompose as $v=\rho+x$, $v^\prime=\rho^\prime + x^\prime$, with $\rho,\rho^\prime \in \g^{\rm v}$, $x,x^\prime \in P$. The form
$$\omega(v,v^\prime)=\langle \rho,d\mu(v^\prime)\rangle - \langle \rho^\prime,d\mu(v)\rangle$$
is a closed $2$-form extending \eqref{form1}. The map $\mu$ is then a moment map for $\omega$ and the second statement is automatic.
\end{proof}
%%%
\begin{remark} It is perhaps worth pointing out that the $G^\cx$-action on $F_k^{1,0}$ is not proper. Indeed, the quotient is the non-Hausdorff space of all adjoint orbits in $\gl(k,\cx)$.
\end{remark}
\begin{remark}  The above proof also indicates how to construct an analogous space $F^{1,0}_{G_\cx}$ for any reductive Lie group by attaching $G_\cx$ to adjoint orbits in certain combinatorial way. We leave the details to the reader.\label{anyG}
\end{remark}
\begin{remark} The closed form $\omega$ on $F_k^{1,0}$ can be written locally in the form \eqref{form1} where $X=J(D)$.
\label{section}
\end{remark}
%%%
The manifolds $F_k^{b,b^\prime}$ are presymplectic and Hamiltonian for any $b,b^\prime$. However $F_k^{1,1}\not\simeq T^\ast GL(k,\cx)$ and $F_k^{b,b^\prime}$ do not seem  well behaved with respect to symplectic quotients (even apart from the fact that the action is not proper).

\end{document}